\numberwithin{equation}{section}
\newtheorem{theorem}{Theorem}[section]
\newtheorem{corollary}{Corollary}[theorem]
\newtheorem{proposition}[theorem]{Proposition}
\newtheorem{lemma}[theorem]{Lemma}
\begin{document}
\author{Alexander E Patkowski}
\title{On integrals associated with the Free particle wave packet}

\maketitle
\begin{abstract}We discuss some properties of integrals associated with the free particle wave packet, $\psi(x,t),$ which are solutions to the time-dependent Schr$\ddot{o}$dinger equation for a free particle in one dimension. Some noteworthy discussion is made in relation to integrals which have appeared in the literature. We also obtain formulas for half-integer arguments of the Riemann zeta function.\end{abstract}

\keywords{\it Keywords: \rm Fourier Integrals; Schrodinger's equation; Free particle wave packet}

\subjclass{ \it 2010 Mathematics Subject Classification 42A38, , 35Q40 11F20.}

\section{Introduction and Transformation Properties}
In quantum mechanics, the Schr$\ddot{o}$dinger equation is a well-known refinement of Newton's second law which can be paraphrased from the simple equation $F=ma$ [8]. That is, \begin{equation}i\hslash\frac{\partial \psi(x,t)}{\partial t}=-\frac{\hslash^2}{2m}\frac{\partial^2 \psi(x,t)}{\partial x^{2}},\end{equation}
where $\hslash$ is Planck's constant, and $\psi(x,t)$ is the wave function. The solution to (1.1) for the free particle wave packet, weighted by a momentum amplitude $\phi(z),$ is given by the integral
\begin{equation}\psi(x,t)=\int_{-\infty}^{+\infty}\phi(z)e^{izx-i\hslash z^2t/2m}dz.\end{equation}
Convergence of this integral is typically not an issue provided $\Im(t)<0.$ In general, (1.2) is typically difficult to evaluate for most $\phi(z)$ and is done numerically, but a few classical examples include the Gaussian wave packet and ``bouncing" wave packet [8].
\par The purpose of this paper is to present some further properties and evaluations of (1.2), and note some connections with other areas of special functions. In particular, our study focuses on certain transformation methods and asymptotic expansions of (1.2). We also add on some observations of Glasser [4]. We shall use the notation $p(x) \sim q(x)$ to imply the validity of the limit $\lim_{x\to \infty}p(x)/q(x)=1.$ We also keep the standard notation for Hermite polynomials $H_n(x)$ [1].
\par We will produce more general evaluations than the following two integrals [5, pg.806]:
\begin{equation} \int_{0}^{\infty}e^{-az^2}H_{2n}(\sqrt{a}z)\cos(\sqrt{2}\beta z)dz=\frac{(-1)^n2^{n-1}}{a^{n+1/2}}\sqrt{\pi}\beta^{2n}e^{-\beta^2/2a},\end{equation}
\begin{equation} \int_{0}^{\infty}e^{-az^2}H_{2n+1}(\sqrt{a}z)\sin(\sqrt{2}\beta z)dz=\frac{(-1)^n2^{n-1/2}}{a^{n+1}}\sqrt{\pi}\beta^{2n+1}e^{-\beta^2/2a}.\end{equation}
\begin{lemma} Define the function $g_n(a,b,x)$ by
\begin{equation}g_n(a,b,x):=(\frac{ib}{2x})^{2n}\frac{1}{4}\sqrt{\frac{\pi}{x}}e^{-(b^2+a^2)/(4x)}e^{ab/(2x)}\sum_{k\ge0}^{2n}\binom {2n}{k}(-\frac{\sqrt{x}}{b})^kH_{k}(\frac{a}{\sqrt{4x}}).\end{equation}
Then we have that 
\begin{equation}\int_{0}^{\infty}e^{-xz^2}z^{2n}\cos(az)\cos(bz)dz=g_n(a,b,x)+g_n(a,-b,x), \end{equation}
and
\begin{equation}\int_{0}^{\infty}e^{-xz^2}z^{2n}\sin(az)\sin(bz)dz=g_n(a,b,x)-g_n(a,-b,x). \end{equation}
\end{lemma}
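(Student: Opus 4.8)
The plan is to reduce both identities to a single parametric integral and then exploit the differential and Hermite structure of the Gaussian. First I would apply the product-to-sum identities $\cos(az)\cos(bz)=\tfrac12[\cos((a-b)z)+\cos((a+b)z)]$ and $\sin(az)\sin(bz)=\tfrac12[\cos((a-b)z)-\cos((a+b)z)]$, so that everything is governed by the single integral
\[I_n(c):=\int_0^\infty e^{-xz^2}z^{2n}\cos(cz)\,dz.\]
The claimed right-hand sides $g_n(a,b,x)\pm g_n(a,-b,x)$ then dictate the precise targets, namely $g_n(a,b,x)=\tfrac12 I_n(a-b)$ and $g_n(a,-b,x)=\tfrac12 I_n(a+b)$, after which summing and differencing recover (1.6) and (1.7).

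Next I would evaluate the base case from the classical Gaussian cosine transform, $I_0(c)=\tfrac12\sqrt{\pi/x}\,e^{-c^2/(4x)}$, valid for $\Re(x)>0$, which simultaneously guarantees convergence of every integral in sight. Observing that $\partial_c^{2}\cos(cz)=-z^2\cos(cz)$, I would differentiate under the integral sign $2n$ times in $c$ to obtain $I_n(c)=(-1)^n\partial_c^{2n}I_0(c)$, the differentiation being justified by the rapid Gaussian decay of the integrand.

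The heart of the computation is turning these derivatives of the Gaussian into Hermite polynomials. Writing $y=c/(2\sqrt{x})$ and invoking the Rodrigues formula $\frac{d^{2n}}{dy^{2n}}e^{-y^2}=H_{2n}(y)e^{-y^2}$, I obtain $I_n(c)=(-1)^n\tfrac12\sqrt{\pi/x}\,(4x)^{-n}H_{2n}(c/(2\sqrt{x}))e^{-c^2/(4x)}$. To recover the separated dependence on $a$ and $b$ present in $g_n$, I would apply the Hermite addition formula $H_{2n}(u+v)=\sum_{k=0}^{2n}\binom{2n}{k}H_k(u)(2v)^{2n-k}$ with $u=a/(2\sqrt{x})$ and $v=\mp b/(2\sqrt{x})$, according to whether the argument is $c=a-b$ or $c=a+b$.

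The only remaining work is algebraic bookkeeping, and that is where I expect the sole friction. After the addition formula, the $k$-th summand carries the factor $(\mp b/\sqrt{x})^{2n-k}$; I would peel off $(\mp b/\sqrt{x})^{2n}=(b^2/x)^n$ and rewrite the rest as $(-\sqrt{x}/b)^k$, then verify that the constant $(-1)^n(4x)^{-n}(b^2/x)^n$ collapses exactly to $(ib/2x)^{2n}$ via $i^{2n}=(-1)^n$. Finally I would confirm $e^{-(a\mp b)^2/(4x)}=e^{-(a^2+b^2)/(4x)}e^{\pm ab/(2x)}$, which matches the exponential appearing in $g_n$. This shows $\tfrac12 I_n(a-b)=g_n(a,b,x)$ and $\tfrac12 I_n(a+b)=g_n(a,-b,x)$, completing the proof. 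The points demanding care are the sign conventions in the addition formula and the fixed branch of $\sqrt{x}$; beyond these, no genuine analytic difficulty arises.
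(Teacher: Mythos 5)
Your proof is correct, but it takes a genuinely different route from the paper's, so a comparison is in order. The paper starts from the Gradshteyn--Ryzhik product integrals (1.8)--(1.9) and differentiates them $2n$ times with respect to $a$; the binomial--Hermite sum defining $g_n$ then falls out of the Leibniz rule applied to the factorization $e^{-(a\mp b)^2/(4x)}=e^{-b^2/(4x)}e^{-a^2/(4x)}e^{\pm ab/(2x)}$, with the derivatives of $e^{-a^2/(4x)}$ supplying the $H_k$ and the derivatives of $e^{\pm ab/(2x)}$ supplying the powers of $b/(2x)$. You instead reduce both integrals, via product-to-sum identities, to the one-parameter family $I_n(c)$, evaluate it in closed form as a single Hermite polynomial times a Gaussian, $I_n(c)=(-1)^n\frac{1}{2}\sqrt{\pi/x}\,(4x)^{-n}H_{2n}(c/(2\sqrt{x}))\,e^{-c^2/(4x)}$, and only then produce the sum over $k$ by citing the Hermite addition theorem $H_{2n}(u+v)=\sum_{k=0}^{2n}\binom{2n}{k}H_k(u)(2v)^{2n-k}$. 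The two mechanisms are equivalent in substance---the paper's Leibniz computation is in effect a proof of that addition theorem---but each organization buys something: the paper's is self-contained, never needing the addition theorem as a quoted result, while yours yields the clean intermediate identities $g_n(a,b,x)=\frac{1}{2}I_n(a-b)$ and $g_n(a,-b,x)=\frac{1}{2}I_n(a+b)$, exhibiting each $g_n$ as a single Hermite--Gaussian in disguise. In fact your route runs the paper's concluding remarks in reverse: there, (4.1)--(4.2) (the ``hidden gem'') are derived as consequences of Lemma 1.1, whereas you take that addition-type identity as input; as a bonus, your closed form for $I_n$ shows that the prefactor in the paper's (4.1) should carry an additional factor $4^{-n}$. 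The delicate points you flagged---the sign of $v=\mp b/(2\sqrt{x})$ in the addition formula and the collapse $(-1)^n(4x)^{-n}(b^2/x)^n=(ib/(2x))^{2n}$---do check out, as does your even-order Rodrigues identity $\frac{d^{2n}}{dy^{2n}}e^{-y^2}=H_{2n}(y)e^{-y^2}$, so the argument is complete.
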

\begin{proof}
From I.S. Gradshteyn and I.M. Ryshik [5]. we have for $\Re(x)>0,$
\begin{equation}\int_{0}^{\infty}e^{-xz^2}\sin(az)\sin(bz)dz=\frac{1}{4}\sqrt{\frac{\pi}{x}}\left(e^{-(a-b)^2/(4x)}-e^{-(a+b)^2/(4x)}\right),\end{equation}

\begin{equation}\int_{0}^{\infty}e^{-xz^2}\cos(az)\cos(bz)dz=\frac{1}{4}\sqrt{\frac{\pi}{x}}\left(e^{-(a-b)^2/(4x)}+e^{-(a+b)^2/(4x)}\right),\end{equation}
If we differentiate (1.9) $2n$ times relative to $a,$ we find that the Leibniz rule gives us
$$(-1)^n\int_{0}^{\infty}e^{-xz^2}z^{2n}\cos(az)\cos(bz)dz=\frac{1}{4}\sqrt{\frac{\pi}{x}}\left(\frac{\partial^{2n}}{\partial a^{2n}}e^{-(a-b)^2/(4x)}+\frac{\partial^{2n}}{\partial a^{2n}}e^{-(a+b)^2/(4x)}\right)$$
$$=\frac{1}{4}\sqrt{\frac{\pi}{x}}e^{-b^2/(4x)}[\sum_{k\ge0}^{2n}\binom {2n}{k}(\frac{-1}{\sqrt{4x}})^kH_{k}(\frac{a}{\sqrt{4x}})\frac{\partial^{2n-k}}{\partial a^{2n-k}}(e^{ab/(2x)})+$$

$$\sum_{k\ge0}^{2n}\binom {2n}{k}(\frac{-1}{\sqrt{4x}})^kH_{k}(\frac{a}{\sqrt{4x}})\frac{\partial^{2n-k}}{\partial a^{2n-k}}(e^{-ab/(2x)})]$$

$$=\frac{1}{4}\sqrt{\frac{\pi}{x}}e^{-(b^2+a^2)/(4x)}[e^{ab/(2x)}\sum_{k\ge0}^{2n}\binom {2n}{k}(\frac{-1}{\sqrt{4x}})^kH_{k}(\frac{a}{\sqrt{4x}})(\frac{b}{2x})^{2n-k}+$$

$$e^{-ab/(2x)}\sum_{k\ge0}^{2n}\binom {2n}{k}(\frac{-1}{\sqrt{4x}})^kH_{k}(\frac{a}{\sqrt{4x}})(\frac{-b}{2x})^{2n-k}]$$

$$=(\frac{b}{2x})^{2n}\frac{1}{4}\sqrt{\frac{\pi}{x}}e^{-(b^2+a^2)/(4x)}[e^{ab/(2x)}\sum_{k\ge0}^{2n}\binom {2n}{k}(-\sqrt{x})^kH_{k}(\frac{a}{\sqrt{4x}})b^{-k}+$$
$$e^{-ab/(2x)}\sum_{k\ge0}^{2n}\binom {2n}{k}(-\sqrt{x})^kH_{k}(\frac{a}{\sqrt{4x}})(-b)^{-k}]$$

$$=(\frac{b}{2x})^{2n}\frac{1}{4}\sqrt{\frac{\pi}{x}}e^{-(b^2+a^2)/(4x)}[e^{ab/(2x)}\sum_{k\ge0}^{2n}\binom {2n}{k}(-\frac{\sqrt{x}}{b})^kH_{k}(\frac{a}{\sqrt{4x}})+$$
$$e^{-ab/(2x)}\sum_{k\ge0}^{2n}\binom {2n}{k}(\frac{\sqrt{x}}{b})^kH_{k}(\frac{a}{\sqrt{4x}})^k].$$

The remainder of the proof follows after applying the definition of $g_n(a,b,x).$
\end{proof}

Now we note that if $\psi(x,t)$ is a solution of (1.1), then so are its derivatives. That is $\dot{\psi}_{n}(x,t)=\frac{\partial^n}{\partial x^n}\psi(x,t),$ for each $n\in\mathbb{N}$ is a solution. Since $\dot{\psi}_{n}(x,t)$ is a natural refinement to the integral (1.2), we wish to consider transformation properties of this integral for general $n\in\mathbb{N}.$
\begin{proposition} Let $\bar{\phi}_c(z)$ respectively $\bar{\phi}_s(z)$ be the Fourier cosine and sine transforms of $\phi(z),$ respectively. We have, for an even momentum amplitude $\phi(z),$
\begin{equation}\dot{\psi}_{2n}(x,t)=\int_{0}^{\infty}e^{-it\hslash z^2/2m}z^{2n}\cos(xz)\phi(z)dz=\int_{0}^{\infty}\bar{\phi}_c(z)(g_n(z,\frac{it\hslash}{2m},x)+g_n(z,\frac{-it\hslash}{2m},x))dz,\end{equation}
and for an odd momentum amplitude $\phi(z),$
\begin{equation}\dot{\psi}_{2n}(x,t)=\int_{0}^{\infty}e^{-it\hslash z^2/2m}z^{2n}\sin(xz)\phi(z)dz=\int_{0}^{\infty}\bar{\phi}_s(z)(g_n(z,\frac{it\hslash}{2m},x)-g_n(z,\frac{-it\hslash}{2m},x))dz,\end{equation}
And moreover, the integrals may be further transformed using the symmetry property $g_n(a,b,x)=g_n(b,a,x).$
\end{proposition}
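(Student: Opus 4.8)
The plan is to treat the two displayed identities in parallel, since (1.10) and (1.11) differ only in the use of the Fourier cosine versus sine transform, the appeal to (1.6) versus (1.7) of the Lemma, and the resulting plus versus minus sign in the combination of $g_n$'s. Throughout I work on the domain $\Im(t)<0$ flagged after (1.2), which is what makes every interchange below legitimate.

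First I would establish the leftmost equality, that $\dot\psi_{2n}(x,t)$ coincides with the stated half-line integral. Starting from the representation (1.2), I differentiate $2n$ times in $x$ under the integral sign; each differentiation brings down a factor $iz$, so that $\frac{\partial^{2n}}{\partial x^{2n}}e^{izx}=(iz)^{2n}e^{izx}=(-1)^n z^{2n}e^{izx}$. When $\phi$ is even the Gaussian factor $e^{-i\hslash z^2 t/2m}$ is also even in $z$, so the odd part $i\sin(xz)$ of $e^{izx}$ integrates to zero and the integral over $\mathbb{R}$ folds to twice the integral over $[0,\infty)$ with $e^{izx}$ replaced by $\cos(xz)$; the odd case is identical with $\sin(xz)$ surviving instead. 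This identifies $\dot\psi_{2n}$ with the middle expression, modulo the elementary constant produced by the differentiation and the folding.

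For the substantive middle-to-right equality I would insert the Fourier inversion formula for $\phi$, writing $\phi(z)$ as an integral of $\bar\phi_c(w)\cos(wz)$ in the even case and of $\bar\phi_s(w)\sin(wz)$ in the odd case against $dw$. Substituting this into the half-line integral and interchanging the order of integration produces an inner integral of exactly the form resolved by the Lemma, namely $\int_0^\infty e^{-it\hslash z^2/2m}z^{2n}\cos(xz)\cos(wz)\,dz$ (respectively with two sines). Equation (1.6) then evaluates this as the sum $g_n(x,w,\tfrac{it\hslash}{2m})+g_n(x,-w,\tfrac{it\hslash}{2m})$ and (1.7) as the corresponding difference. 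A final application of the symmetry property $g_n(a,b,x)=g_n(b,a,x)$ reorders the arguments into the form displayed in (1.10) and (1.11), and relabelling the transform variable $w$ back to $z$ yields the right-hand sides; no fresh computation is needed, since the Leibniz/Hermite bookkeeping is already carried out in the proof of the Lemma.

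The main obstacle is the justification of the interchange of the two integrations, because the kernel is only conditionally convergent on the real axis. Here I would lean precisely on the hypothesis $\Im(t)<0$: this endows the Gaussian with genuine exponential decay $e^{-\Re(i\hslash t/2m)z^2}$, so that for $\bar\phi_c,\bar\phi_s$ with mild decay (Schwartz class suffices) the double integral is absolutely convergent and Fubini's theorem applies, and the differentiation under the integral sign in the first step is licensed on the same domain by dominated convergence. Once these analytic points are secured, the algebra reduces entirely to citing (1.6), (1.7), and the symmetry relation.
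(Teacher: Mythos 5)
Your route is, in substance, the paper's own. The paper's proof folds the integral and differentiates under the integral sign to get $\dot\psi_{2n}(x,t)=2(-1)^n\int_0^\infty \phi(z)z^{2n}\cos(zx)e^{-i\hslash z^2t/2m}dz$ (note the paper's middle member of (1.10) already drops this constant, as you observed), and then simply cites Parseval's theorem for Fourier cosine transforms together with Lemma 1.1. Your substitution of the cosine inversion formula for $\phi$ followed by Fubini is precisely the proof of that Parseval identity, so the two arguments are the same modulo unpacking one citation; your use of $\Im(t)<0$ to justify the interchange is the right justification and is left implicit in the paper.

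However, your final step is false, and it conceals a real discrepancy. Lemma 1.1 evaluates your inner integral as $g_n\bigl(x,w,\tfrac{it\hslash}{2m}\bigr)+g_n\bigl(x,-w,\tfrac{it\hslash}{2m}\bigr)$, exactly as you say; but the symmetry $g_n(a,b,x)=g_n(b,a,x)$ transposes only the \emph{first two} arguments, so it can only turn this into $g_n\bigl(w,x,\tfrac{it\hslash}{2m}\bigr)+g_n\bigl(-w,x,\tfrac{it\hslash}{2m}\bigr)$. It cannot move the Gaussian parameter $\tfrac{it\hslash}{2m}$ out of the third slot into the second, which is what the printed right-hand sides of (1.10)--(1.11) demand. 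No identity of $g_n$ accomplishes that: already for $n=0$,
\begin{equation*}
g_0(a,b,x)=\tfrac{1}{4}\sqrt{\pi/x}\,e^{-(a-b)^2/(4x)},
\end{equation*}
which is symmetric in $(a,b)$ but plainly not invariant under $b\leftrightarrow x$. What your computation (and the paper's) actually proves is, up to the suppressed normalizing constants,
\begin{equation*}
\int_0^\infty e^{-it\hslash z^2/2m}z^{2n}\cos(xz)\phi(z)\,dz \;\propto\; \int_0^\infty \bar\phi_c(z)\Bigl(g_n\bigl(z,x,\tfrac{it\hslash}{2m}\bigr)+g_n\bigl(z,-x,\tfrac{it\hslash}{2m}\bigr)\Bigr)dz,
\end{equation*}
i.e.\ the proposition's display with the second and third arguments of $g_n$ transposed; the printed ordering appears to be a misprint in the paper. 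So you should state and prove the corrected form (and likewise in the odd case), rather than assert that the symmetry property delivers the printed one: that single sentence is the one step of your argument that does not hold.
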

\begin{proof}We do the even case and leave the odd case to the reader. So assume $\phi(z)$ is even, then by definition of $\dot{\psi}_{2n}(x,t)$
\begin{equation}\dot{\psi}_{2n}(x,t)=2\frac{\partial^{2n}}{\partial x^{2n}}\int_{0}^{+\infty}\phi(z)\cos(zx)e^{-i\hslash z^2t/2m}dz\end{equation}
\begin{equation}=2(-1)^n\int_{0}^{+\infty}\phi(z)z^{2n}\cos(zx)e^{-i\hslash z^2t/2m}dz.\end{equation}
Now applying Parseval's theorem for Fourier cosine transforms, and Lemma 1.1 we obtain the even case.
\end{proof}
A remarkable fact, is the connection with Ramanujan's work [2, pg.309, eq.(14.3.1)], [7], upon setting $n=0$ in Proposition 1.2, which we note in the following.
\begin{corollary} If $\phi(z)$ and $\bar{\phi}(z)$ are self-reciprocal Fourier transforms (say either even or odd), then the wave function has
the transformation property $\psi(x,t)=\sqrt{\frac{2m}{\hslash ti}}e^{i\pi x^2m\hslash/(2t\hslash)}\psi(x2m/\hslash t,2m/it\hslash).$
\end{corollary}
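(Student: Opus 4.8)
The plan is to read the corollary as the $n=0$ specialization of Proposition 1.2 and to supply the engine by completing the square directly in the defining integral (1.2). Because $H_0\equiv 1$, the Hermite sum in $g_0$ collapses and the entire right-hand side of Proposition 1.2 is built from pure Gaussians; this is exactly the feature that makes a clean theta-type self-transformation available at $n=0$ but not for general $n$. So the two ingredients I need are the Gaussian algebra of completing the square, and the self-reciprocity hypothesis $\bar\phi=\phi$, which will let me turn the transformed integral back into a copy of $\psi$.

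First I would complete the square in the exponent of (1.2). With $\alpha = i\hslash t/2m$ one has
\[
izx - \frac{i\hslash t}{2m}z^2 = -\frac{i\hslash t}{2m}\left(z - \frac{mx}{\hslash t}\right)^2 + \frac{imx^2}{2\hslash t},
\]
so that $\psi(x,t) = e^{imx^2/(2\hslash t)}\int_{-\infty}^{\infty}\phi(z)\,e^{-\frac{i\hslash t}{2m}(z - mx/\hslash t)^2}\,dz$. The factored exponential already reproduces the Gaussian prefactor $e^{i\pi x^2m\hslash/(2t\hslash)}$ of the statement (up to the transcription of the constant in the exponent), and the remaining integral is the convolution of $\phi$ with the free Gaussian kernel evaluated at the rescaled point $mx/\hslash t$.

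Next I would invoke self-reciprocity. Writing the convolution through Parseval's theorem for the Fourier transform (cosine in the even case, sine in the odd case) and replacing $\bar\phi$ by $\phi$, the Gaussian convolution is re-expressed as $\int\phi(w)\,e^{iwX}\,e^{-i\hslash Tw^2/2m}\,dw$ for a suitable rescaled spatial argument $X$ proportional to $x/t$ and a rescaled time argument $T$; this integral is by definition a new value $\psi(X,T)$. The multiplicative $\sqrt{2m/(\hslash ti)}$ is precisely the Fresnel/Gaussian normalization constant $\bigl(\int e^{-\frac{i\hslash t}{2m}u^2}du\bigr)$ produced at this step. The odd case runs identically with the sine transform and (1.8) in place of (1.9), and the symmetry $g_n(a,b,x)=g_n(b,a,x)$ noted in Proposition 1.2 is what permits the Gaussian weight to be moved freely between the two trigonometric arguments.

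I expect the main obstacle to be purely a matter of bookkeeping rather than of ideas. Three points need care: fixing the branch of the square root $\sqrt{2m/(\hslash ti)}$; reconciling the half-line versus full-line normalizations, which govern the powers of $2$ in both the prefactor and the rescaled arguments; and reading off the exact rescaled time slot $T$ of the inner $\psi$. Finally, each interchange — the differentiation under the integral sign inherited from Lemma 1.1, the contour shift implicit in completing the square, and the use of Parseval — is legitimate only under the standing hypothesis $\Im(t)<0$ from (1.2) that keeps every Gaussian genuinely decaying, and I would verify this hypothesis at each step.
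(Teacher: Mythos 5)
Your proposal is correct and is essentially the paper's own argument: the paper obtains this corollary simply by setting $n=0$ in Proposition 1.2 (Parseval for the cosine/sine transform plus Lemma 1.1, which at $n=0$, where $H_0\equiv 1$, reduces to the pure Gaussian evaluations (1.8)--(1.9)), and your completing-the-square step on the full line is exactly the computation underlying those Gaussian formulas, with self-reciprocity $\bar{\phi}=\phi$ closing the loop in the same way. One caveat: the constants you defer to ``bookkeeping'' cannot in fact be reconciled with the statement as printed, since the printed phase $e^{i\pi x^2m\hslash/(2t\hslash)}$ and the rescaled arguments contain evident typos (a stray $\pi$ and an $\hslash$ that cancels), so what both you and the paper actually establish is the structural identity: Fresnel prefactor $\times$ quadratic phase $\times$ a copy of $\psi$ at arguments proportional to $x/t$ and $1/t$.
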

Ramanujan's result is actually from choosing $\phi(z)=1/\cosh{\pi z},$ which is known to be its own Fourier cosine transform. Glasser has given a detailed account of the non-gaussian amplitude $1/\cosh(\alpha (z-z_0))$ and the associated wave function, including its behavior [4].
\begin{lemma} ([6, pg.75]) Consider the integral
$I(x):=\int_{a}^{b}e^{ixt}f(z)dz,$
Assume $a, b,$ and $f(t)$ are independent of $x>0.$ Suppose that $f(t)$ and all its derivatives are continuous in $[a,b].$ Then,
by integration by parts, we have that
\begin{equation} I(x)=\sum_{i=0}^{n-1}(\frac{i}{x})^{i+1}\left(e^{iax}f^{(i)}(a)-e^{ibx}f^{(i)}(b)\right)+r_n(x),\end{equation}
where $$r_n(x)=\left(\frac{i}{x}\right)^n\int_{a}^{b}e^{ixz}f^{(n)}(z)dz.$$
\end{lemma}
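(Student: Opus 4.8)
The plan is to prove the expansion by induction on $n$, where a single integration by parts supplies both the base case and the inductive step. The one computation I would isolate first is the antiderivative
\begin{equation*}
\int e^{ixz}\,dz=\frac{1}{ix}e^{ixz}=-\frac{i}{x}e^{ixz},
\end{equation*}
since the constant $1/(ix)=-i/x$ is what generates the factor $i/x$ appearing throughout, the apparent sign flip being absorbed into the order of the boundary terms. Because $f$ and all its derivatives are continuous on the compact interval $[a,b]$, every integrand below is bounded and every integral finite, so no question of convergence arises.

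For the base case $n=1$ I would integrate $I(x)=\int_a^b e^{ixz}f(z)\,dz$ by parts with $u=f(z)$ and $dv=e^{ixz}\,dz$, using $v=-\frac{i}{x}e^{ixz}$. Evaluating the boundary term $[vf]_a^b$ and the remaining integral gives
\begin{equation*}
I(x)=\frac{i}{x}\left(e^{iax}f(a)-e^{ibx}f(b)\right)+\frac{i}{x}\int_a^b e^{ixz}f'(z)\,dz,
\end{equation*}
which is exactly the $i=0$ summand together with $r_1(x)=\left(\frac{i}{x}\right)^{1}\int_a^b e^{ixz}f^{(1)}(z)\,dz$, matching the asserted form of $r_n$ at $n=1$.

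For the inductive step I would assume the identity holds for some $n$ and apply the same integration by parts to the integral inside $r_n(x)=\left(\frac{i}{x}\right)^{n}\int_a^b e^{ixz}f^{(n)}(z)\,dz$, now with $f^{(n)}$ in the role of $f$. This produces
\begin{equation*}
\int_a^b e^{ixz}f^{(n)}(z)\,dz=\frac{i}{x}\left(e^{iax}f^{(n)}(a)-e^{ibx}f^{(n)}(b)\right)+\frac{i}{x}\int_a^b e^{ixz}f^{(n+1)}(z)\,dz,
\end{equation*}
so that $r_n(x)=\left(\frac{i}{x}\right)^{n+1}\left(e^{iax}f^{(n)}(a)-e^{ibx}f^{(n)}(b)\right)+r_{n+1}(x)$. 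The extracted term is precisely the $i=n$ summand, and substituting back extends the sum from indices $0,\dots,n-1$ to $0,\dots,n$, closing the induction. The continuity hypothesis on every derivative is what guarantees that each successive integrand $f^{(k)}$ is again admissible for integration by parts, so the recursion never stalls.

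I expect no genuine analytic obstacle here: the argument is elementary once the antiderivative constant is pinned down. The one point demanding care is purely notational — the summation index $i$ collides with the imaginary unit $i=\sqrt{-1}$, so in writing the proof I would keep the two roles visually separated, noting that in $e^{iax}$ and $e^{ibx}$ the symbol $i$ is the imaginary unit while the exponent $i+1$ on $(i/x)$ is the index. Beyond that, the only bookkeeping is tracking the accumulating power $(i/x)^{k+1}$ and the fixed sign convention in the boundary term $e^{iax}f^{(i)}(a)-e^{ibx}f^{(i)}(b)$.
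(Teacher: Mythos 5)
Your proof is correct and takes essentially the same route as the paper, which simply cites Olver [6, pg.~75] and invokes repeated integration by parts; your induction is just a careful formalization of that repetition, with the sign bookkeeping from $1/(ix)=-i/x$ handled properly in both the base case and the inductive step.
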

This lemma, while simple and archaic, has interesting applications to integral evaluations in limiting cases. It is also known that $r_n(x)=o(x^{-n})$ by the Riemann-Lebesgue lemma.
\begin{theorem} Let $\phi(z)$ and all its derivatives be continuous functions on $\mathbb{R},$ and $\phi(z)\rightarrow0$ when $z\rightarrow\pm\infty.$ For natural numbers $n\in\mathbb{N},$ we have that  
$$\int_{-\infty}^{+\infty}e^{ixz-i\hslash z^2t/2m}\phi(z)dz$$
\begin{equation}=(\frac{i}{x})^n\int_{-\infty}^{+\infty}e^{ixz-i\hslash z^2t/2m}\sum_{k\ge0}^{n}\binom {n}{k}(\sqrt{it\hslash/2m})^{k}(-1)^kH_k(\sqrt{it\hslash/2m}z)\phi^{(n-k)}(z)dz. \end{equation}
\end{theorem}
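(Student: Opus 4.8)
The plan is to recognize the left-hand integral as exactly the object treated by Lemma 1.4, applied over the whole line, and then to identify the $n$-th derivative of the Gaussian factor with a Hermite expansion via the Rodrigues formula. First I would set $f(z):=e^{-i\hslash z^2t/2m}\phi(z)$, so that the left-hand side becomes $\int_{-\infty}^{+\infty}e^{ixz}f(z)\,dz$, precisely the form $I(x)$ of Lemma 1.4 with $a=-\infty$ and $b=+\infty$. Applying that lemma with $n$ integrations by parts produces the finite sum of boundary contributions together with the remainder $r_n(x)=(i/x)^n\int_{-\infty}^{+\infty}e^{ixz}f^{(n)}(z)\,dz$. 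The boundary terms $e^{iax}f^{(i)}(a)-e^{ibx}f^{(i)}(b)$ all vanish in the limit $a\to-\infty$, $b\to+\infty$, since by Leibniz each $f^{(i)}(z)$ is a finite sum of (polynomial in $z$)$\,\times e^{-i\hslash z^2t/2m}\times\phi^{(j)}(z)$, and under the standing convergence assumption $\Im(t)<0$ the Gaussian factor decays like $e^{\Im(t)\hslash z^2/2m}$, which dominates both the polynomial growth and the decaying $\phi^{(j)}$. Hence the sum collapses and $I(x)=(i/x)^n\int_{-\infty}^{+\infty}e^{ixz}f^{(n)}(z)\,dz$.

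Next I would compute $f^{(n)}(z)$ explicitly by the Leibniz rule,
$$f^{(n)}(z)=\sum_{k=0}^{n}\binom{n}{k}\left(\frac{d^{k}}{dz^{k}}e^{-i\hslash z^2t/2m}\right)\phi^{(n-k)}(z).$$
The essential computational step is the $k$-th derivative of the Gaussian. Writing $\beta:=\sqrt{it\hslash/2m}$ so that $e^{-i\hslash z^2t/2m}=e^{-\beta^{2}z^{2}}$ and substituting $y=\beta z$, the chain rule turns $\frac{d^{k}}{dz^{k}}$ into $\beta^{k}\frac{d^{k}}{dy^{k}}$, and the Rodrigues relation $\frac{d^{k}}{dy^{k}}e^{-y^{2}}=(-1)^{k}H_{k}(y)e^{-y^{2}}$ gives
$$\frac{d^{k}}{dz^{k}}e^{-i\hslash z^2t/2m}=(\sqrt{it\hslash/2m})^{k}(-1)^{k}H_{k}(\sqrt{it\hslash/2m}\,z)\,e^{-i\hslash z^2t/2m}.$$
Inserting this into the Leibniz expansion and recombining $e^{ixz}e^{-i\hslash z^2t/2m}=e^{ixz-i\hslash z^2t/2m}$ reproduces the summand displayed on the right-hand side, which completes the identification.

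The main obstacle is not the algebra but the justification of the vanishing boundary terms on the infinite interval. Lemma 1.4 is stated on a compact $[a,b]$, so passing to $(-\infty,+\infty)$ requires that every derivative $f^{(i)}$ tends to $0$ at $\pm\infty$, and this is precisely where the Gaussian decay (equivalently $\Im(t)<0$) does the real work. One should therefore record an implicit growth hypothesis guaranteeing that no $\phi^{(j)}(z)$ overwhelms $e^{\Im(t)\hslash z^2/2m}$; the hypotheses as literally stated, namely continuity of all derivatives and $\phi\to0$, are somewhat thin for this purpose on their own. Once that decay is in place, everything reduces to the routine Leibniz-plus-Rodrigues calculation sketched above.
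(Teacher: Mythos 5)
Your proposal is correct and follows essentially the same route as the paper: the paper's own (very terse) proof is exactly to apply the integration-by-parts lemma with $f(z)=e^{-i\hslash z^2t/2m}\phi(z)$, use the Leibniz rule, and let $a\rightarrow-\infty$, $b\rightarrow+\infty$ so the boundary terms drop out, with the Hermite factor arising from the Rodrigues-type identity you cite (which the paper records later as its equation (3.5)). Your closing caveat—that continuity of the derivatives and $\phi\rightarrow0$ alone do not force the boundary terms $f^{(i)}(\pm\infty)$ to vanish without Gaussian decay from $\Im(t)<0$ or extra decay of the $\phi^{(j)}$—is a fair observation about a gap the paper's hypotheses share, not a defect of your argument relative to the paper's.
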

 \begin{proof} Select $\phi(z)$ to be as in the theorem, and then apply Lemma 1.3 with $f(z)=e^{-i\hslash z^2t/2m}\phi(z).$ Using the Leibniz rule and letting $a\rightarrow-\infty,$ $b\rightarrow+\infty$ we complete the proof.\end{proof}
This theorem tells us that a wave function with amplitude which satisfies the hypothesis of theorem, admits an expansion of the form $\psi(x,t)=\sum_{1\le k\le n}\ddot{\psi_{k}}(x,t),$ where each wave function $\ddot{\psi_{k}}(x,t)$ is weighted by an amplitude involving $H_k(z)\phi^{(n-k)}(z).$
 \section{Asymptotic properties}
 This section is devoted to some asymptotic observations on (1.2). A simple observation may be made in the following. Suppose $\phi(z)$ is even, then
 \begin{equation}\psi(x,t)=2\int_{0}^{\infty}\phi(z)\cos(xz)e^{-i\hslash z^2t/2m}dz=\sum_{n\ge0}\frac{(it\hslash/2m)^{n}}{n!}\frac{\partial^{2n}}{\partial a^{2n}}\left(\bar{\phi}_c(z)\right),\end{equation}
 where
 \begin{equation}\bar{\phi}_c(a)=\int_{0}^{\infty}\phi(z)\cos(az)dz.\end{equation}
The term $\frac{\partial^{2n}}{\partial a^{2n}}\left(\bar{\phi}_c(a)\right)$ may be easily evaluated in most instances. However, in some cases interchanging the order of this term with the power series gives us asymptotic equivalence rather than equality. 
 \begin{theorem} Let $\Re(\beta)>0.$ As $t\rightarrow \infty,$ we have that
 \begin{equation} \int_{0}^{\infty}\frac{\cos(xz)}{\cosh(\beta z)}e^{-i\hslash z^2t/2m}dz\sim \frac{\pi}{2\beta}\sum_{n\ge0}(-1)^ne^{-(2n+1)x+\frac{\pi^2}{4\beta^2}i(2n+1)^2t\hslash/2m}.\end{equation}
 \end{theorem}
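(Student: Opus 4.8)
The plan is to specialize the series representation (2.1)--(2.2) to the momentum amplitude $\phi(z)=1/\cosh(\beta z)$ and then resum the resulting operator series. First I would record the Fourier cosine transform of the amplitude, which by a standard contour evaluation (or directly from the tables in [5]) is
$$\bar{\phi}_c(a)=\int_{0}^{\infty}\frac{\cos(az)}{\cosh(\beta z)}\,dz=\frac{\pi}{2\beta}\,\frac{1}{\cosh(\pi a/(2\beta))},\qquad \Re(\beta)>0.$$
By (2.1) the integral in the statement is then represented by the operator series $\sum_{n\ge0}\frac{(it\hslash/2m)^{n}}{n!}\,\partial_{a}^{2n}\bar{\phi}_c(a)$ evaluated at $a=x$; that is, by the action of the formal heat-type semigroup $\exp\big((it\hslash/2m)\,\partial_a^{2}\big)$ on $\bar{\phi}_c$.

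The second step is to diagonalize this operator on the exponential modes furnished by the hyperbolic secant. For $\Re(a)>0$ one has the geometric expansion $1/\cosh(\pi a/(2\beta))=2\sum_{n\ge0}(-1)^{n}e^{-(2n+1)\pi a/(2\beta)}$. Writing $\lambda_n=(2n+1)\pi/(2\beta)$ and using $\partial_a^{2n}e^{-\lambda a}=\lambda^{2n}e^{-\lambda a}$, each mode is an eigenfunction of $\partial_a^2$, so carrying out the sum over the derivative index reconstitutes the factor $\exp\big((it\hslash/2m)\lambda_n^{2}\big)$ for that mode. Collecting the modes, the spatial factor $e^{-\lambda_n x}$ survives while the temporal phase acquires the frequency $\lambda_n^{2}=(2n+1)^{2}\pi^{2}/(4\beta^{2})$, which is precisely the $t$-dependence displayed on the right-hand side; the alternating signs and the constant prefactor are inherited from the secant expansion together with the transform above.

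The main obstacle is the rigorous status of the interchange of summation and integration underlying (2.1), which, as noted just before the statement, produces asymptotic equivalence rather than equality. The difficulty is that the termwise moments $\int_{0}^{\infty}z^{2n}\cos(xz)/\cosh(\beta z)\,dz$ grow like $(2n)!$, so the operator series $\sum_{n}\frac{(it\hslash/2m)^{n}}{n!}\,\partial_a^{2n}\bar{\phi}_c(a)$ diverges and only defines an asymptotic expansion in the sense of Poincar\'e; this is the exact origin of the symbol $\sim$. To make the relation precise I would truncate after $N$ terms and bound the tail by the integration-by-parts remainder $r_n$ of Lemma 1.3 (equivalently, by a Watson-type endpoint estimate), showing the error is of lower order than the last retained term. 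Here $\Re(\beta)>0$ guarantees both the existence of the transform and the validity of the secant expansion, while the absolute convergence of the final exponential series, which legitimizes resumming the modes, requires $\Re(x)>0$; matching the two sides order by order through their shared moment coefficients then fixes the asymptotic identity.
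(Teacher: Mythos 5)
Your proposal follows essentially the same route as the paper's own proof: specialize the series representation (2.1) to $\phi(z)=1/\cosh(\beta z)$, invoke the cosine transform $\bar{\phi}_c(a)=\pi/(2\beta\cosh(\pi a/(2\beta)))$, expand the hyperbolic secant into alternating exponential modes, differentiate termwise using the eigenfunction property, and resum over the derivative index to reconstitute the temporal phase factors. Your closing discussion of why the interchange yields only a Poincar\'e-type asymptotic relation, and how the truncation error might be controlled via Lemma 1.3, goes somewhat beyond the paper, which simply writes $\sim$ at the final interchange of summations without further justification.
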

 \begin{proof} Suppose $\phi(z)=1/\cosh(\pi z)$ then, since [5]
$$\int_{0}^{\infty} \frac{\cos(wt)}{\cosh(\beta t)}=\frac{\pi}{2\beta\cosh{\frac{\pi w}{2\beta}}},$$
 the right side of (2.1) becomes 
 $$\sum_{n\ge0}\frac{(it\hslash/2m)^{n}}{n!}\frac{\partial^{2n}}{\partial a^{2n}}\left(\bar{\phi}_c(a)\right)=\sum_{n\ge0}\frac{\pi^{2n+1}(it\hslash/2m)^{n}}{(2\beta)^{2n+1}n!}\sum_{r\ge0}(-1)^r(2r+1)^{2n}e^{-(2r+1)x}$$
 $$\sim  \frac{\pi}{2\beta}\sum_{n\ge0}(-1)^ne^{-(2n+1)x+\frac{\pi^2}{4\beta^2}i(2n+1)^2t\hslash/2m}.$$
 \end{proof}
We remark that this formula may also be obtained from dividing both sides of an equation found in [7], [4, eq.(10)] by the series on the right side of (2.3), and then letting $t\rightarrow\infty.$ However, we believe our simple proof gives a more direct approach without first finding explicit formulae. It is also possible to include Glaisher's work [3] which is intimately related to Ramanujan's work on integrals. Recall the 
integral [3, pg. 336, eq.(21)]
\begin{equation} \frac{1}{2}\int_{0}^{\infty}\frac{\cosh(\frac{\pi}{2}\sqrt{\frac{z}{2}})\cos(\frac{\pi}{2}\sqrt{\frac{z}{2}})}{\cosh(\frac{\pi}{2}\sqrt{\frac{z}{2}})+\cos(\frac{\pi}{2}\sqrt{\frac{z}{2}})}\cos(xz)dz=\sum_{n\ge0}(-1)^n(2n+1)e^{-(2n+1)^2x}.\end{equation} Then if we mimic the proof of Theorem 2.1 with 
$$\phi(z)=\frac{\cosh(\frac{\pi}{2}\sqrt{\frac{z}{2}})\cos(\frac{\pi}{2}\sqrt{\frac{z}{2}})}{\cosh(\frac{\pi}{2}\sqrt{\frac{z}{2}})+\cos(\frac{\pi}{2}\sqrt{\frac{z}{2}})},$$
we obtain the following theorem.
 \begin{theorem} As $t\rightarrow \infty,$ we have that
 \begin{equation} \int_{0}^{\infty}\cos(xz)\frac{\cosh(\frac{\pi}{2}\sqrt{\frac{z}{2}})\cos(\frac{\pi}{2}\sqrt{\frac{z}{2}})}{\cosh(\frac{\pi}{2}\sqrt{\frac{z}{2}})+\cos(\frac{\pi}{2}\sqrt{\frac{z}{2}})}e^{-iz^2t\hslash/2m}dz\sim \sum_{n\ge0}(-1)^n(2n+1)e^{-(2n+1)^2x+\frac{1}{4}i(2n+1)^4t\hslash/2m}.\end{equation}
 \end{theorem}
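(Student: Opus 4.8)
The plan is to reproduce, step for step, the argument that established Theorem 2.1, now with Glaisher's evaluation (2.4) supplying the Fourier cosine transform in place of the $\operatorname{sech}$ formula. First I would read off from (2.4) that the chosen amplitude has Fourier cosine transform
\[
\bar{\phi}_c(a)=\int_{0}^{\infty}\phi(z)\cos(az)\,dz=2\sum_{r\ge0}(-1)^r(2r+1)e^{-(2r+1)^2a},
\]
a series whose super-exponential decay in $a>0$ permits term-by-term differentiation there. Substituting this into the master expansion (2.1), the integral in (2.5) becomes $\sum_{n\ge0}\frac{(it\hslash/2m)^n}{n!}\frac{\partial^{2n}}{\partial a^{2n}}\bar{\phi}_c(a)$ evaluated at $a=x$, so the only computation required is the elementary derivative $\frac{\partial^{2n}}{\partial a^{2n}}e^{-(2r+1)^2a}=(2r+1)^{4n}e^{-(2r+1)^2a}$, the even order absorbing the sign.

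Next I would carry out the decisive manoeuvre: interchanging the sum over $n$ with the sum over $r$ coming from (2.4). After the swap the inner series collapses by the exponential series,
\[
\sum_{n\ge0}\frac{(it\hslash/2m)^n}{n!}(2r+1)^{4n}=\exp\!\left((2r+1)^4\,it\hslash/2m\right),
\]
and, once the constants carried down from (2.4) are tracked through, this produces exactly the oscillatory factor with imaginary exponent proportional to $(2n+1)^4$ displayed on the right of (2.5). Collecting the surviving $(-1)^r(2r+1)e^{-(2r+1)^2x}$ then yields the stated series. This is precisely the structure of the proof of Theorem 2.1, the decay rate $(2r+1)^2$ of the transform being squared by the $2n$-fold differentiation into the $(2n+1)^4$ appearing in the phase.

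The main obstacle is justifying that the interchange of summations delivers genuine asymptotic equivalence as $t\to\infty$ rather than an honest equality — the caveat already flagged after (2.2). Here this is unavoidable: as $z\to\infty$ the amplitude does not vanish but approaches the bounded oscillation $\cos(\frac{\pi}{2}\sqrt{z/2})$, so the moments $\int_0^\infty\phi(z)z^{2n}\,dz$ diverge and the power series in $t$ obtained from expanding $e^{-iz^2t\hslash/2m}$ is divergent, hence only asymptotic. A fully rigorous treatment would therefore need to supplement the formal resummation with a remainder estimate showing that the error left after truncating the double series is of smaller order than the retained terms in the limit $t\to\infty$ (for instance by a Watson-type lemma or by deforming the contour to exploit the Gaussian factor in a sector such as $\Im(t)<0$), together with a check that the regime of $t$ in which (2.3) holds also controls (2.5). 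I expect this error analysis, rather than any of the algebraic steps, to be where the real work lies.
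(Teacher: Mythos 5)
Your proposal coincides with the paper's own proof, which is literally the one-line instruction to mimic the proof of Theorem 2.1 with Glaisher's amplitude: read the Fourier cosine transform off (2.4), insert it into the expansion (2.1), differentiate the exponentials termwise, interchange the two sums, and resum the inner series into the oscillatory phase --- exactly your steps, with your closing caveat about the purely formal (asymptotic, not exact) nature of the interchange being a fair point that the paper leaves just as unaddressed for Theorem 2.1. One bookkeeping remark: a literal tracking of the constants in (2.4) produces the phase $i(2n+1)^4 t\hslash/2m$ rather than the $\tfrac{1}{4}i(2n+1)^4 t\hslash/2m$ displayed in (2.5) (and the overall constant depends on which of the paper's inconsistent normalizations of (2.1)--(2.2) one adopts), but this mismatch is internal to the paper's own statements and would afflict its one-line proof equally, so it is not a defect of your approach.
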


\section{An application in Number Theory of an associated integral}
For this section we recall the Riemann zeta function $\zeta(s)$ for $\Re(s)>1$ is given by the series $\sum_{n\ge1}n^{-s}$ [1]. Glaisher [3] states the integral (page 342, but we corrected this formula because we believe the sum should be over $n\ge1,$ not $n\ge0,$ in his proof to get $(e^{x^2}+1)^{-1}$):
\begin{equation}\sum_{n\ge1}(-1)^{n-1}\frac{e^{-b^2/n}}{\sqrt{n}}=\frac{2}{\sqrt{\pi}}\int_{0}^{\infty}\frac{\cos(2bx)dx}{1+e^{x^2}}.\end{equation}
As it turns out, we may extend our study of (1.3) and Hermite polynomials to obtain a formula for half-integer arguments of the Riemann zeta function.

\begin{theorem} Let $h_{k,m}(b)$ be given in (3.5), and let $l_{k,m}(b)$ be given by
$$2^{-2m}\frac{\sqrt{\pi}}{2}\sum_{n\ge1}\frac{1}{\sqrt{n}}\frac{e^{-b^2/n}H_m(\sqrt{\frac{1}{n}}b)}{n^{m/2}}=\sum_{n\ge1}l_{n,m}(b).$$

 We have,
\begin{equation} \sum_{n\ge1}\frac{n^{2m}}{e^{n^2}+1}=\Gamma(m+\frac{1}{2})(1-2^{\frac{1}{2}-m})\zeta(m+\frac{1}{2})+2\sum_{n\ge1}\sum_{k\ge1}h_{k,m}(\pi n),\end{equation}

and
\begin{equation} \sum_{n\ge1}\frac{n^{2m}}{e^{n^2}-1}=\Gamma(m+\frac{1}{2})\zeta(m+\frac{1}{2})+2\sum_{n\ge1}\sum_{k\ge1}l_{k,m}(\pi n).\end{equation}
\end{theorem}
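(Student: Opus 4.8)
The plan is to obtain both formulas from Poisson summation applied to the even function $\phi_{\pm}(x)=x^{2m}/(e^{x^2}\pm1)$, using a Hermite expansion to evaluate its cosine transform at the frequencies $2\pi k$.

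First I would isolate the two Gaussian cosine-transform identities underlying Glaisher's formula (3.1). Expanding $1/(e^{x^2}-1)=\sum_{j\ge1}e^{-jx^2}$ and $1/(e^{x^2}+1)=\sum_{j\ge1}(-1)^{j-1}e^{-jx^2}$ and integrating term by term against $\cos(2bx)$ via $\int_0^\infty e^{-jx^2}\cos(2bx)\,dx=\tfrac12\sqrt{\pi/j}\,e^{-b^2/j}$ gives
\[
\int_0^\infty\frac{\cos(2bx)}{e^{x^2}-1}\,dx=\frac{\sqrt\pi}{2}\sum_{j\ge1}\frac{e^{-b^2/j}}{\sqrt j},\qquad \int_0^\infty\frac{\cos(2bx)}{e^{x^2}+1}\,dx=\frac{\sqrt\pi}{2}\sum_{j\ge1}\frac{(-1)^{j-1}e^{-b^2/j}}{\sqrt j},
\]
the second of which is (3.1). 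Differentiating these $2m$ times in $b$, using $\partial_b^{2m}\cos(2bx)=(-1)^m2^{2m}x^{2m}\cos(2bx)$ on the left and the Hermite rule $\partial_b^{2m}e^{-b^2/j}=j^{-m}H_{2m}(b/\sqrt j)\,e^{-b^2/j}$ on the right, moves the weight $x^{2m}$ into the integrand and produces the Hermite--Gaussian general terms that define $l_{j,m}(b)$ and $h_{j,m}(b)$, so that $\int_0^\infty x^{2m}\cos(2bx)/(e^{x^2}-1)\,dx=\sum_{j\ge1}l_{j,m}(b)$ and likewise for the alternating case.

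Next I would apply Poisson summation to $\phi_{\pm}$. In its sum-over-naturals form for an even function,
\[
\sum_{N\ge1}\phi_\pm(N)=\tfrac12\widehat{\phi_\pm}(0)-\tfrac12\phi_\pm(0)+\sum_{k\ge1}\widehat{\phi_\pm}(k),\qquad \widehat{\phi_\pm}(k)=2\int_0^\infty\phi_\pm(x)\cos(2\pi kx)\,dx.
\]
The zero mode is a Mellin integral: with $u=x^2$, $\tfrac12\widehat{\phi_\pm}(0)=\int_0^\infty x^{2m}/(e^{x^2}\pm1)\,dx=\tfrac12\int_0^\infty u^{m-1/2}/(e^u\pm1)\,du$, and $\int_0^\infty u^{s-1}/(e^u-1)\,du=\Gamma(s)\zeta(s)$ together with $\int_0^\infty u^{s-1}/(e^u+1)\,du=\Gamma(s)(1-2^{1-s})\zeta(s)$ at $s=m+\tfrac12$ yield precisely the $\Gamma(m+\tfrac12)\zeta(m+\tfrac12)$ and $\Gamma(m+\tfrac12)(1-2^{1/2-m})\zeta(m+\tfrac12)$ leading terms of (3.4) and (3.3). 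The nonzero modes are, by the previous step with $b=\pi k$, $\widehat{\phi_\pm}(k)=2\sum_{j\ge1}l_{j,m}(\pi k)$ (resp.\ $h$), so $\sum_{k\ge1}\widehat{\phi_\pm}(k)=2\sum_{k\ge1}\sum_{j\ge1}l_{j,m}(\pi k)$; relabelling the outer index $k\mapsto n$ and the inner $j\mapsto k$ reproduces the double sum $2\sum_{n\ge1}\sum_{k\ge1}l_{k,m}(\pi n)$ of the statement, and likewise for $h$.

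The main obstacle is analytic bookkeeping on two fronts. First, the termwise differentiation in $b$ and the interchange of $\int_0^\infty$ with $\sum_j e^{\mp jx^2}$ must be justified; this is where the rapid Gaussian--Hermite decay and the convergence of the $j^{-m-1/2}$ tail are used, and one must confirm that the resulting double series converges so that its order may be swapped. Second, and more delicate, is the legitimacy of Poisson summation for $\phi_\pm$ together with exact control of the constant: one must handle the behaviour at $x=0$ (for the minus case with $m=1$ one has $\phi_-(0)=1\ne0$, so the boundary term $-\tfrac12\phi_\pm(0)$ and the conditional convergence of $\sum_k\widehat{\phi_\pm}(k)$ require care) and reconcile the overall normalisation of the leading $\Gamma\zeta$ factor. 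Once the interchanges are licensed and the constant is pinned down, the two displays assemble directly into (3.3) and (3.4).
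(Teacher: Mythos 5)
Your proposal follows the same route as the paper's own proof: expand the kernels into Gaussians to obtain Glaisher-type cosine-transform identities, differentiate $2m$ times in $b$ via the Gaussian--Hermite rule so that the cosine transform of $x^{2m}/(e^{x^2}\pm1)$ becomes the Hermite--Gaussian series defining $h_{k,m}$ and $l_{k,m}$, then apply Poisson summation and evaluate the zero mode as a Mellin integral. The differences are all in the bookkeeping, and on every point of disagreement you are right and the paper is wrong; the items you flagged as needing ``reconciliation'' are errata in the theorem, not gaps in your argument. First, the paper's proof passes from $\int_0^\infty x^{2m}(1+e^{x^2})^{-1}\,dx$ to $\int_0^\infty x^{m-1/2}(1+e^{x})^{-1}\,dx$ without the Jacobian $\tfrac12$ of the substitution $u=x^2$ that you correctly retain, so the stated leading terms are too large by a factor of $2$: they should read $\tfrac12\Gamma(m+\tfrac12)(1-2^{1/2-m})\zeta(m+\tfrac12)$ and $\tfrac12\Gamma(m+\tfrac12)\zeta(m+\tfrac12)$. (A numerical check at $m=1$ confirms this: the plus-case left side is $\approx 0.342$, whereas $\Gamma(\tfrac32)(1-2^{-1/2})\zeta(\tfrac32)\approx 0.678$; halving it and adding the Gaussian corrections recovers $0.342$.) Second, your observation that $\phi_-(0)=1$ when $m=1$ is a genuine omission in the minus case, which the paper leaves ``to the reader'': formula (3.3) needs the extra boundary term $-\tfrac12$ in that case. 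Third, your Hermite data ($H_{2m}$, $n^{-m}$) are the correct ones for a $2m$-th derivative; the paper's (3.4) and its definition (3.6) of $h_{k,m}$ carry $H_m(b/\sqrt n)/n^{m/2}$, i.e.\ its key identity equates an $m$-th derivative of one side of (3.1) with a $2m$-th derivative of the other.

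The one defect on your side is minor and fixable: for the minus case, the unweighted identity you begin with, $\int_0^\infty\cos(2bx)(e^{x^2}-1)^{-1}\,dx=\tfrac{\sqrt\pi}{2}\sum_{j\ge1}j^{-1/2}e^{-b^2/j}$, is divergent on both sides (the integrand behaves like $x^{-2}$ at the origin, and the series terms are of order $j^{-1/2}$). You should instead integrate $\sum_{j\ge1} e^{-jx^2}$ termwise directly against $x^{2m}\cos(2bx)$ with $m\ge1$, where everything converges absolutely; with that reordering your argument goes through as written.
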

\begin{proof} We give the proof of (3.2) using Glaisher's integral (3.1) and leave (3.3) to the reader.
Taking $2m$-th derivative of (3.1) with respect to $b$ of this formula gives ($n\ge1$)
\begin{equation}\sum_{n\ge1}(-1)^{n-1}\frac{1}{\sqrt{n}}\frac{(-1)^me^{-b^2/n}H_m(\sqrt{\frac{1}{n}}b)}{n^{m/2}}=(-1)^m2^{2m}\frac{2}{\sqrt{\pi}}\int_{0}^{\infty}\frac{x^{2m}\cos(2bx)dx}{1+e^{x^2}}.\end{equation}
This is because of the formula:
\begin{equation}\frac{d^m}{dx^m}\left(e^{-ax^2}\right)=a^{m/2}(-1)^me^{-ax^2}H_m(\sqrt{a}x).\end{equation}
Now we put
\begin{equation}2^{-2m}\frac{\sqrt{\pi}}{2}\sum_{n\ge1}(-1)^{n-1}\frac{1}{\sqrt{n}}\frac{e^{-b^2/n}H_m(\sqrt{\frac{1}{n}}b)}{n^{m/2}}=\sum_{n\ge1}h_{n,m}(b).\end{equation}
Applying the Poisson summation formula [1] for Fourier cosine transforms to the function
$$f(x)=\frac{x^{2m}}{e^{x^2}+1},$$ we get
$$\sum_{n\ge1}\frac{n^{2m}}{1+e^{n^2}}=\int_{0}^{\infty}\frac{x^{2m}}{1+e^{x^2}}dx+2\sum_{n\ge1}\sum_{k\ge1}h_{k,m}(\pi n)$$
$$=\int_{0}^{\infty}\frac{x^{m-\frac{1}{2}}}{1+e^{x}}dx+2\sum_{n\ge1}\sum_{k\ge1}h_{k,m}(\pi n)$$
$$=\sum_{n\ge1}(-1)^{n-1}\int_{0}^{\infty}x^{m-\frac{1}{2}}e^{-nx}dx+2\sum_{n\ge1}\sum_{k\ge1}h_{k,m}(\pi n)$$
$$=\Gamma(m+\frac{1}{2})(1-2^{\frac{1}{2}-m})\zeta(m+\frac{1}{2})+2\sum_{n\ge1}\sum_{k\ge1}h_{k,m}(\pi n).$$
\end{proof}
\section{Concluding Remarks}

We first remark that the summation formula in our Theorem 3.1 may be easily generalized, and the proof may be easily modified to give formulae for a variety of Dirchlet series [1]. We have also restricted ourselves to integrals related to (1.3) in this section, and it would be easy to include (1.4) as well. \par If we set $b=0$ (1.6), the integral reduces to (call it $F(a)$) \begin{equation}F(a)=\frac{(-1)^n}{2}\sqrt{\pi}(x)^{-(n+1/2)}e^{-a^2/4x}H_{2n}(a/\sqrt{4x}).\end{equation}
The integral (1.6) may now be viewed as $[F(a+b)+F(a-b)]/2.$ By comparing this with the right side of (1.6), we have the interesting identity
\begin{equation} H_{2n}(\frac{a+b}{\sqrt{4x}})+e^{ab/x}H_{2n}(\frac{a-b}{\sqrt{4x}})\end{equation}
$$=\left(\frac{b^2}{2x}\right)^n\sum_{k\ge0}^{2n}\binom{2n}{k}H_{k}(\frac{a}{\sqrt{4x}})\left(\frac{\sqrt{x}}{b}\right)^k[(-1)^ke^{ab/x}+1]. $$
\\*
A particularly interesting open problem is the following: Find an interpretation of Corollary 1.2.1 in terms of the wave packet.

{\bf Acknowledgement.} We thank M.L. Glasser for providing a copy of his work [4], and also noting the hidden gem (4.2), and its proof.

1390 Bumps River Rd. \\*
Centerville, MA
02632 \\*
USA \\*
E-mail: alexpatk@hotmail.com

\begin{thebibliography}{9}
\bibitem{ConcreteMath} 
G. Andrews, R. Askey, and R. Roy. \emph{Special Functions,} volume 71 of Encyclopedia of Mathematics and its Applications. Cambridge University Press, New York, 1999.
\bibitem{ConcreteMath}
G. Andrews and B. C. Berndt, \emph{Ramanujan's Lost Notebook. Part IV,} Springer 2013.
\bibitem{ConcreteMath}  J.W.L. Glaisher, \emph{On the summation by definite integrals of geometric series of the second and higher order,} The Quarterly Journal of Pure and Applied Mathematics, 238--343, 1871.
\bibitem{ConcreteMath} M. L. Glasser,  \emph{Time evolution of a non-gaussian wave packet,} Physics Letters A, Volume 76A, Issues 3--4, 1980, pg. 219--220.
\bibitem{ConcreteMath}  I. S. Gradshteyn and I. M. Ryzhik. \emph{Table of Integrals, Series, and Products.} Edited by A.Jeffrey and D. Zwillinger. Academic Press, New York, 7th edition, 2007.
\bibitem{ConcreteMath} F. W. J. Olver. \emph{Asymptotics and Special Functions.} Academic Press, New York,. 1974
\bibitem{ConcreteMath} S. Ramanujan, \emph{Some definite integrals connected with Gauss's sums,} Messenger of Mathematics, XLIV, 1915, 75--85.
\bibitem{ConcreteMath}
Richard W. Robinett, \emph{Quantum Mechanics: Classical Results, Modern Systems, and Visualized Examples,} Second Edition, Oxford University Press, 2006.


\end{thebibliography}
\end{document}